\DeclareMathAlphabet{\mathpzc}{OT1}{pzc}{m}{it}
\theoremstyle{plain}
\newtheorem{theorem}{Theorem}[section]
\newtheorem{lemma}[theorem]{Lemma}
\newtheorem{proposition}[theorem]{Proposition}
\newtheorem{corollary}[theorem]{Corollary}
\theoremstyle{definition}
\newtheorem{definition}[theorem]{Definition}
\newtheorem{examples}[theorem]{Examples}
\theoremstyle{remark}
\newtheorem{remark}[theorem]{Remark}
\newcommand{\Rw}{\Rightarrow}
\newcommand{\Lw}{\Leftarrow}
\newcommand{\hrw}{\hookrightarrow}
\newcommand{\frf}{\mathfrak{f}}
\newcommand{\frg}{\mathfrak{g}}
\newcommand{\frx}{\mathfrak{x}}
\newcommand{\calA}{\mathcal{A}}
\newcommand{\calO}{\mathcal{O}}
\newcommand{\frF}{\mathfrak{F}}
\newcommand{\frX}{\mathfrak{X}}
\DeclareMathOperator{\yoneda}{\mathpzc{y}}
\DeclareMathOperator{\yonmult}{\mathpzc{m}}
\DeclareMathOperator{\Sup}{Sup}
\DeclareMathOperator{\upc}{\uparrow\!}
\DeclareMathOperator{\downc}{\downarrow\!}
\DeclareMathOperator{\Clos}{Cl}
\DeclareMathOperator{\kar}{kar}
\DeclareMathOperator{\Up}{Up}
\DeclareMathOperator{\tbelow}{\ll_{_\mT}}
\DeclareMathOperator{\und}{\&}
\newcommand{\catfont}[1]{\mathsf{#1}}
\newcommand{\SET}{\catfont{Set}}
\newcommand{\REL}{\catfont{Rel}}
\newcommand{\ORD}{\catfont{Ord}}
\newcommand{\MOD}{\catfont{Mod}}
\newcommand{\DLAT}{\catfont{DLat}}
\newcommand{\SLAT}{\catfont{SLat}}
\newcommand{\TOP}{\catfont{Top}}
\newcommand{\FRM}{\catfont{Frm}}
\newcommand{\Frm}[1]{\catfont{Frm}_{\wedge,#1}}
\newcommand{\SLat}[1]{\catfont{SLat}_{\wedge,#1}}
\newcommand{\Spl}[1]{\catfont{Spl}(#1)}
\newcommand{\two}{\catfont{2}}
\newcommand{\relto}{{\longrightarrow\hspace*{-2.8ex}{\mapstochar}\hspace*{2.6ex}}}
\newcommand{\monadfont}[1]{\mathbbm{#1}}
\newcommand{\mT}{\monadfont{T}}
\newcommand{\mF}{\monadfont{F}}
\newcommand{\mD}{\monadfont{D}}
\newcommand{\mFa}[1]{\monadfont{F}_{_{\! #1}}}
\newcommand{\monad}{(T,\yoneda,\yonmult)}
\newcommand{\fmonad}{(F,\yoneda,\yonmult)}
\newcommand{\dmonad}{(D,\yoneda,\yonmult)}
\newcommand{\fmonada}[1]{(F_{_{\! #1}},\yoneda,\yonmult)}
\newcommand{\doo}[1]{\overset{\centerdot}{#1}}
\newcommand{\eps}{\varepsilon}
\newcommand{\op}{\mathrm{op}}
\newcommand{\field}[1]{\mathds{#1}}
\newcommand{\N}{\field{N}}
\newcommand{\R}{\field{R}}
\begin{document}

\title{A four for the price of one duality principle for distributive spaces}

\author{Dirk Hofmann}
\thanks{Partial financial assistance by Centro de Investiga\c{c}\~ao e Desenvolvimento em Matem\'atica e Aplica\c{c}\~oes da Universidade de Aveiro/FCT and the project MONDRIAN (under the contract PTDC/EIA-CCO/108302/2008) is gratefully acknowledged.}
\address{Departamento de Matem\'{a}tica\\ Universidade de Aveiro\\3810-193 Aveiro\\ Portugal}
\email{dirk@ua.pt}
\subjclass[2010]{06A06, 06A75, 06D10, 06D22, 06D50, 06D75, 18C15, 54A20, 54F65}
\keywords{topological space, ordered set, distributivity, disconnected space, idempotents split completion, duality}
\begin{abstract}
In this paper we consider topological spaces as generalised orders and characterise those spaces which satisfy a (suitably defined) topological distributive law. Furthermore, we show that the category of these spaces is dually equivalent to a certain category of frames by simply observing that both sides represent the idempotents split completion of the same category.
\end{abstract}

\date{} 

\maketitle

\section*{Introduction}

Our work with topological spaces presented as convergence structures shaped the idea that \emph{topological spaces are generalised orders}, and therefore can be studied using notions and techniques from order theory. To start with, the reflexivity and the transitivity axiom
\begin{align*}
x&\le x &\text{and}&& x\le y\,\und\,y\le z\,&\Rw\,x\le z
\intertext{of and ordered set match exactly the conditions (see \citep{Bar_RelAlg})}
\doo{x}&\to x &\text{and}&& \frX\to\frx\,\und\,\frx\to x\,&\Rw\,m_X(\frX)\to x
\end{align*}
characterising those convergence relations between ultrafilters and points coming from a topology; and further work revealed also the following analogies.
\begin{longtable}{|c|c|}
\hline
\hspace*{2em}For an ordered set $X$:\hspace*{2em} & \hspace*{2em}For a topological space $X$:\hspace*{2em}\\
\hline
up-closed subset & closed subset\\
down-closed subset & filter of opens\\
non-empty down-closed subset & proper filter of opens\\
directed down-closed subset & prime filter of opens\\
upper bound & limit point\\
supremum & smallest limit point\\
cocomplete\footnote{We use the term ``\emph{co}complete'' here since we require the existence of suprema, which are \emph{co}limits.} ordered set & continuous lattice\\
directed cocomplete ordered set & stably compact space\\
down-set monad on $\ORD$ & filter monad on $\TOP$\\
\hline
\end{longtable}

Another concept of order theory which offers itself in this context is that of distributivity (or continuity). Recall that a cocomplete ordered set $X$ is completely distributive whenever arbitrary infima commute with arbitrary suprema in $X$. An elegant way to express this was found in \citep{FW_CCD}, where it is shown that $X$ is completely distributive if and only if the monotone map defined by $A\mapsto\Sup A$ ($A\subseteq X$ is down-closed) has a left adjoint. Similarly, a directed cocomplete ordered set $X$ is continuous if and only if the map $A\mapsto\Sup A$ ($A\subseteq X$ is directed and down-closed) has a left adjoint.

Motivated by the latter formulations, we say that a cocomplete topological space ($=$ continuous lattice) is \emph{completely distributive} if the continuous map sending a filter of opens $\frf$ to its smallest limit point has a left adjoint in $\TOP$. Fortunately, not only the notion of but also several results about distributivity can indeed be imported to topological spaces (see \cite{HW_AppVCat,Hof_DualityDistSp}); however, so far we were not able to formulate the concept of distributive space in elementary topological terms. It is the aim of this note to improve this situation, and in Theorem \ref{thm:Tdist} we show that distributivity relates to a certain disconnectedness condition.

Finally, in the last section we will show that the category of distributive spaces is dually equivalent to a certain category of frames by simply observing that both sides represent the idempotents split completion of the same category.

\section{Filter monads on $\TOP$}

For a topological space $X$, we write $\calO X$ for the collection of open subsets of $X$, and $\calO(x)$ for the set of all open neighbourhoods of $x\in X$. The category of topological spaces and continuous maps we denote as $\TOP$. We emphasise that $\TOP$ is an \emph{ordered} category, meaning there is a (non-trivial) order on the set $\TOP(X,Y)$ of all continuous maps between $X$ and $Y$ which is preserved by composition from either side. In fact, the topology of a space $X$ induces an order relation on the set $X$: for $x,x'\in X$ we put
\begin{align*}
x \le x' &:\iff\doo{x}\to x'\iff \calO(x')\subseteq\calO(x).
\end{align*}
Note that our order is \emph{dual} to the \emph{specialisation order}.
This relation is always reflexive and transitive, it is anti-symmetric if and only if $X$ is T$_{_0}$. The lack of anti-symmetry is not really problematic; however, it usually forces us to substitute equality $x=y$ by equivalence $x\simeq y$, where $x\simeq y$ whenever $x\le y$ and $y\le x$. The underlying order of topological spaces can be transferred point-wise to continuous maps $f,g:X\to Y$: $f\le g$ whenever $f(x)\le g(x)$ for all $x\in X$. 

The ordered character of $\TOP$ allows us to speak about \emph{adjunction}: continuous maps $f:X\to Y$ and $g:Y\to X$ form an adjunction, written as $f\dashv g$, if $1_X\le g\cdot f$ and $f\cdot g\le 1_Y$. One calls $f$ left adjoint and $g$ right adjoint, motivated by the fact that $f\dashv g$ if and only if
\[
 Uf(\frx)\to y\iff \frx\to g(y),
\]
for all ultrafilter $\frx\in UX$ and all points $y\in Y$\footnote{If we would have chosen the specialisation order and not its dual, then the left adjoint would appear on the right hand side in the formula above.}. We recall that adjoints determined each other, that is, $f\dashv g$ and $f\dashv g'$ imply $g\simeq g'$, likewise, $f\dashv g$ and $f'\dashv g$ imply $f\simeq f'$.

The \emph{filter monad} $\mF=\fmonad$ on $\TOP$ as well as many of its submonads are extensively described in \citep{Esc_InjSp,EF_SemDom} (and we refer to \citep{MS_Monads} for more information on monads). We recall that the filter functor $F:\TOP\to\TOP$ sends a topological space $X$ to the space $FX$ of all filters on the lattice $\calO X$ of open subsets of $X$, and the topology on $FX$ is generated by the sets
\begin{align*}
A^\#&=\{\frf\in FX\mid A\in\frf\}
\end{align*}
where $A\subseteq X$ is open. The induced order relation on $FX$ is dual to the inclusion order, that is, $\frf\le\frg$ whenever $\frf\supseteq\frg$. For $f:X\to Y$ continuous, $Ff:FX\to FY$ is defined by
\[
 \frf\mapsto\{B\subseteq Y\mid f^{-1}(B)\in\frf\},
\]
for $\frf\in FX$. Since $Ff^{-1}(B^\#)=(f^{-1}(B))^\#$ for every $B\subseteq Y$ open, $Ff$ is indeed continuous. The unit $\yoneda_X:X\to FX$ sends a point $x\in X$ to its neighbourhood filter $\calO(x)$, and $\yonmult_X:FFX\to FX$ sends $\frF\in FFX$ to the filter $\{A\subseteq X\mid A^\#\in\frF\}$.

In this paper we think of the filter monad as the topological version of the ``down-set monad''. To explain this analogy, we recall that the down-set monad $\mD=\dmonad$ on $\ORD$ is defined by the down-set functor $D:\ORD\to\ORD$ which sends an ordered set $X$ to the set $DX$ of all down-sets of $X$, ordered by inclusion, the unit $\yoneda_X:X\to DX$ maps a point $x$ to its principal down-set $\downc x$, and the multiplication $\yonmult_X:DDX\to DX$ is given by union $\calA\mapsto\bigcup\calA=\{x\in X\mid \yoneda_X(x)\in\calA\}$. We also note that a subset $A$ of $X$ is down-closed if and only if its characteristic map is monotone of type $X^\op\to\two$, hence $DX\simeq\two^{X^\op}$, and the function $Df:DX\to DY$ is the left adjoint of the inverse image function $DY\to DX,\,\varphi\mapsto\varphi\cdot f$. Let now $X$ be a topological space, then each filter $\frf$ on $\calO X$ defines the set $\{\frx\in UX \mid \frf\subseteq\frx\}$ of ultrafilters on $X$. In fact, these are precisely the closed subsets of $UX$ with respect to a certain topology $\tau$ on $UX$, and we write $X^\op$ for the topological space $(UX,\tau)$. Then $X^\op$ turns out to be exponentiable (=core-compact), and the filter space $FX$ is homeomorphic to the function space $\two^{X^\op}$. Under the identification $FX\simeq\two^{X^\op}$, both the unit and the multiplication of the Filter monad are formally similar to their counterparts in the down-set monad: $\yoneda_X(x)$ is the characteristic map of $\{\frx\in UX\mid \frx\to x\}$ and $\yonmult_X(\Psi)$ is the characteristic map of $\{\frx\in UX\mid U\yoneda_X(\frx)\in\Psi\}$. This description of the filter monad serves here just for explaining why we think of this monad as the ``down-set monad'' and will not be used further (except for Remark \ref{rem:sup_vs_cocompl}), therefore we refer for all the details to \citep[Example 4.10]{HT_LCls} and \citep[Subsection 2.5]{Hof_Cocompl} and to \citep{CH_Compl,Hof_DualityDistSp} for a description of the dual space $X^\op$. 

For any regular cardinal $\alpha$ or for $\alpha=\Omega$ the class of all cardinals, we say that a filter $\frf\in FX$ is \emph{unreachable by $\alpha$} (or simply: an $\alpha$-filter) whenever, for any family $(A_i)_{i\in I}$ of opens where $\#(I)<\alpha$, $\bigcup_{i\in I}A_i\in\frf$ implies $A_i\in\frf$ for some $i\in I$. The \emph{$\alpha$-filter monad} $\mFa{\alpha}=\fmonada{\alpha}$ on $\TOP$ is the submonad of $\mF$ defined by those filters unreachable by $\alpha$, here we implicitly use that $\yoneda_X(x)$ and $\yonmult_X(\frX)$ are $\alpha$-filters if $\frX\in FFX$ is so. Then $\mFa{0}=\mF$ is the filter monad, $\mFa{1}$ is the proper filter monad, $\mFa{\omega}$ is the prime filter monad and $\mFa{\Omega}$ the completely prime filter monad. 

We need to explain the analogies between $\alpha$-filters and certain down-sets mentioned in the table above. Every down-set $A\subseteq X$ of an ordered set $X$ induces a monotone map
\[
A\otimes-:\Up(X)\to\two,\,B\mapsto A\otimes B=\llbracket\exists x\in X\,.\,x\in A\And x\in B\rrbracket,
\]
where $\Up(X)$ denotes the set of all up-sets of $X$ ordered by inclusion. Similarly, every filter $\frf$ of opens of a topological space $X$ (which we identify with a closed subset $\calA$ of $UX$) induces a monotone map
\[
\calA\otimes-:\Clos(X)\to\two,\,B\mapsto\calA\otimes B=\llbracket\exists\frx\in UX\,.\,\frx\in\calA \And \frx\in UB\rrbracket,
\]
where $\Clos(X)$ denotes the set of all closed subsets of $X$ ordered by inclusion. Then $A$ is non-empty if and only if $A\otimes-$ preserves the top element of $\Up(X)$, and $A$ is directed if and only if $A\otimes-$ preserves finite infima. Similarly, $\frf$ is proper if and only if $\calA\otimes-$ preserves the top element of $\Clos(X)$, and $\frf$ is prime if and only if $\calA\otimes-$ preserves finite infima. In general, $\frf$ is an $\alpha$-filter if and only if $\calA\otimes-$ preserves $\alpha$-infima.

Throughout this paper we fix $\alpha\in\{0,1,\omega,\Omega\}$, and write $\mT=\monad$ for the monad $\mFa{\alpha}$ of filters unreachable by $\alpha$. As usual, $\TOP^{\mT}$ denotes the category of $\mT$-algebras and $\mT$-homomorphisms. A very important fact to know about our monad $\mT=\monad$ is that $\mT$ is of \emph{Kock-Z\"oberlein type} (see \citep{EF_SemDom}), which amounts to saying that $T\!\yoneda_X\le \yoneda_{TX}$, for every topological space $X$.  This property is very convenient since it ensures that a continuous map $l:TX\to X$ is the structure morphism of a $\mT$-algebra if and only if $l\cdot\yoneda_X=1_X$, and then $l\dashv\yoneda_X$ in $\TOP$. Therefore a topological space $X$ admits at most one $\mT$-algebra structure, and in this case we say that $X$ \emph{is} a $\mT$-algebra. Note that every $\mT$-algebra $X$ is T$_{_0}$ since $TX$ is so and $\yoneda_X:X\to TX$ is an embedding. From $l\dashv\yoneda_X$ one obtains
\[
 l(\frf)\le x\iff \frf\le\yoneda_X(x)\iff \calO(x)\subseteq\frf\iff\frf\to x
\]
for all $\frf\in TX$ and $x\in X$, hence $l$ sends a filter $\frf$ to its smallest limit point. This also shows that a continuous map $f:X\to Y$ between $\mT$-algebras is a $\mT$-homomorphism if and only if $f$ preserves smallest limit points of filters $\frf\in TX$. Thinking again in ordered terms, a $\mT$-algebra $X$ admits all suprema (=smallest limit points) of down-sets (=filters $\frf\in TX$), and a $\mT$-homomorphism preserves these suprema. As for ordered sets, one can easily show that a left adjoint continuous map $f:X\to Y$ between topological spaces preserves all suprema (=smallest limit points) which exist in $X$. However, we also point out a crucial difference. For a topological space $X$, in order to be a $\mF$-algebra it is \emph{not} enough that each filter $\frf\in FX$ has a smallest limit point $l(\frf)$. The problem here is that the map $\frf\mapsto l(\frf)$ might not be continuous (see \citep[Example 5.7]{HW_AppVCat} for an example). In other words, a topological space $X$ might have all suprema but is not yet cocomplete. 

The algebras for the these various kinds of filter monads are well-known and, for instance, described in \citep{EF_SemDom}. We recall here that the algebras for the 
\begin{itemize}
\item completely prime filter monad are precisely the sober spaces, see \citep{Joh_StoneSp}.
\item prime filter monad are precisely the stably (or well-) compact spaces, or, equivalently, the ordered compact Hausdorff spaces, see \citep{Nach_TopOrd,Sim82a,Wyl84a,Jung04}. 
\item proper filter monad are the continuous Scott domains \citep{Book_ContLat}.
\item filter monad are the continuous lattices, see \citep{Wyl85,Day_Filter}.
\end{itemize}
Also note that every $\mT$-algebra is sober since $\mFa{\Omega}\hrw\mFa{\omega}\hrw\mFa{1}\hrw\mFa{0}$.

\section{Cocomplete spaces}

In order to describe distributive spaces in the next section, it will be useful to know some properties of cocomplete spaces (= $\mT$-algebras), which is the topic of this section. Essentially we are going to show that \citeauthor{Sim82a}' description of the algebras for the prime filter monad as precisely the \emph{well-compact} spaces (see \citep{Sim82a}) is valid for the other monads too, we only need to exchange the way-below relation on $\calO X$ by a ``relative-to-$\mT$'' variant.

\begin{definition}
For a topological space $X$ and $U,V\subseteq X$ open, we say that $U$ is \emph{$\mT$-below} $V$, and write $U\tbelow V$, if every $\frf\in TX$ with $U\in\frf$ has a limit point in $V$.
\end{definition}
Clearly, if $\mT=\mFa{\omega}$ is the prime filter monad, then $\mT$-below specialises to the well-known way-below relation, and for $\alpha=\Omega$ one has $U\tbelow V$ if and only if  $U\subseteq V$. Furthermore, for $\mT=\mF$ being the filter monad, $U\tbelow V$ holds precisely when there exists some $x\in X$ with $U\subseteq\downc x\subseteq V$ (just consider $\frf=\upc\{U\}$ the principal filter induced by $U$ in the definition above).
\begin{lemma}
Let $X$ be a topological space. Then $X$ is a $\mT$-algebra if and only if
\begin{enumerate}
\item\label{Conda} $X$ is T$_{_0}$,
\item\label{Condc} every $\frf\in TX$ has a smallest limit point in $X$, and
\item\label{Condb} for every $x\in X$ and every $U\in\calO(x)$ there exists $V\in\calO(x)$ with $V\tbelow U$.
\end{enumerate}
\end{lemma}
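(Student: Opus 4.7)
The plan is to exploit the Kock--Zöberlein character of $\mT$: since $\mT$ is of Kock--Zöberlein type, giving $X$ a $\mT$-algebra structure is the same as producing a \emph{continuous} map $l:TX\to X$ with $l\cdot\yoneda_X=1_X$, and such an $l$ is then automatically right adjoint to $\yoneda_X$, sending each filter to its smallest limit point. Conditions (i) and (ii) account for the \emph{existence and unicity} of such an $l$ as a function; condition (iii) will be shown to be exactly the missing ingredient for its \emph{continuity}.

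For the necessity direction, assume $X$ is a $\mT$-algebra with structure $l:TX\to X$. Then $X$ is $T_{_0}$ because $\yoneda_X:X\to TX$ is an embedding (and $TX$ is $T_{_0}$), and, as already observed in the text, $l(\frf)$ is the smallest limit point of $\frf$, giving (i) and (ii). For (iii), fix $x\in X$ and $U\in\calO(x)$. Then $\yoneda_X(x)=\calO(x)\in l^{-1}(U)$, and continuity of $l$ provides a basic open $V^\#\subseteq l^{-1}(U)$ with $\yoneda_X(x)\in V^\#$, i.e.\ an open $V$ with $x\in V$ and such that every $\frf\in TX$ with $V\in\frf$ satisfies $l(\frf)\in U$. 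Since $l(\frf)$ is a limit point of $\frf$ lying in $U$, this is precisely $V\tbelow U$.

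For the sufficiency direction, define $l:TX\to X$ by sending $\frf\in TX$ to its smallest limit point, well-defined by (ii) and unique by (i). Clearly $l\cdot\yoneda_X=1_X$, since $x$ is the smallest limit point of $\calO(x)$ in a $T_{_0}$-space. It remains to verify that $l$ is continuous; this is the one step where the proof could plausibly go wrong, and is where condition (iii) does the real work. Take $U\subseteq X$ open and $\frf\in l^{-1}(U)$. Since $\frf\to l(\frf)$, one has $U\in\frf$; by (iii) applied to $x=l(\frf)$, there is $V\in\calO(l(\frf))$ with $V\tbelow U$, so in particular $V\in\frf$ and hence $\frf\in V^\#$. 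To conclude $V^\#\subseteq l^{-1}(U)$, take any $\frg\in V^\#$; then $\frg$ has a limit point $y\in U$, and since $l(\frg)$ is the smallest limit point of $\frg$ in the order of the text (so $l(\frg)\le y$, i.e.\ $\calO(y)\subseteq\calO(l(\frg))$), we obtain $U\in\calO(l(\frg))$, that is, $l(\frg)\in U$. Hence $l^{-1}(U)$ is open, and the Kock--Zöberlein property finishes the proof.

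The main obstacle is keeping the order convention straight: the text uses the \emph{dual} of the specialisation order, so the smallest limit point is the one with the \emph{largest} neighbourhood filter, and it is this asymmetry that makes the implication ``$y\in U$ and $l(\frg)\le y$ imply $l(\frg)\in U$'' go through. Once this is pinned down, everything else is a direct unwinding of the definition of $\tbelow$ and of the basic opens $V^\#$ of $TX$.
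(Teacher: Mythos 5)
Your proof is correct and follows essentially the same route as the paper's: both directions hinge on the equivalence of $V^\#\subseteq l^{-1}(U)$ with $V\tbelow U$ (using that opens are down-closed in the chosen order, so a filter with \emph{some} limit point in $U$ has its \emph{smallest} limit point in $U$). The only cosmetic difference is in the necessity of (iii), where you invoke continuity of $l$ at the point $\yoneda_X(x)$ directly rather than computing $Tl(\yoneda_{TX}(\yoneda_X(x)))=\calO(x)$ as the paper does; the content is identical.
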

\begin{proof}
Assume first that $X$ is a $\mT$-algebra with structure map $l:TX\to X$. We observed already above that $X$ satisfies conditions \eqref{Conda} and \eqref{Condc}. To see \eqref{Condb}, let $x\in X$ and put $\frF=e_{TX}(e_X(x))=Te_X(e_X(x))$. Then $Tl(\frF)=\calO(x)$, hence $l^{-1}(U)\in\frF$ for every $U\in\calO(x)$. But $l^{-1}(U)\in\frF$ if and only if there is some $V\in\calO(x)$ with $V^\#\subseteq l^{-1}(U)$, which is equivalent to $V\tbelow U$. Assume now that $X$ satisfies \eqref{Conda}, \eqref{Condc} and \eqref{Condb}. We define $l(\frf)$ to be the smallest limit point of $\frf\in TX$ in $X$, which is indeed unique thanks to \eqref{Conda}. To show that $l:TX\to X$ is continuous, let $\frf\in TX$ and $U\in\calO(l(\frf))$. Take $V\in\calO(l(\frf))$ with $V\tbelow U$. Then $\frf\in V^\#$ and $l(V^\#)\subseteq U$.
\end{proof}

\begin{remark}\label{rem:sup_vs_cocompl}
In the lemma above, \eqref{Condc} ensures that each filter $\frf\in FX$ has a smallest limit point, which is unique by \eqref{Conda}, and \eqref{Condb} guarantees that the map $FX\to X$ sending each filter $\frf$ to its smallest limit point is continuous. However, thought \eqref{Condc} does not provide us necessarily with a left adjoint of $\yoneda_X:X\to TX$ in $\TOP$, \eqref{Condc}  is sufficient for the existence of a left adjoint in $\ORD$. It is interesting to observe that the monotone map $\yoneda_X:X\to TX$ preserves all infima which exist in the ordered set $X$ provided that every prime filter of opens (or, equivalently, every ultrafilter) on the topological space $X$ has a smallest limit point. To see this, let $(x_i)_{i\in I}$ be a family of elements $x_i\in X$ with infimum $x\in X$. We wish to show that $\yoneda_X(x)=\bigwedge_{i\in I}\yoneda_X(x_i)$ in $FX$. Recall that the order on $FX$ is dual to inclusion, hence the infimum of a family of filters is the filter generated by their union. Here it is convenient to have $FX\simeq\two^{X^\op}$ since infima on the right hand side are calculated point-wise. Now one easily sees
\[
 \bigwedge_{i\in I}\yoneda_X(x_i)=\bigcap_{i\in I}\{\frx\in UX\mid \frx\to x_i\}
=\{\frx\in X\mid\forall\,i\in I\,.\,\frx\to x_i\}=\{\frx\in X\mid\frx\to x\}=\yoneda_X(x)
\]
since, if $\frx\in UX$ is an ultrafilter with smallest limit point $z$, then $\frx\to x_i$ for all $i\in I$ if and only if $z\le x_i$ for all $i\in I$ if and only if $z\le x$ if and only if $\frx\to x$. Hence, if additionally the underlying order of $X$ is complete, then every filter has a smallest limit point.
\end{remark}

\begin{definition}
Let $X$ be a topological space. Then $X$ is \emph{$\mT$-core-compact} if, for every $x\in X$ and every $U\in\calO(x)$, there exists $V\in\calO(x)$ with $V\tbelow U$. Furthermore, $X$ is \emph{$\mT$-stable} if, for finite families $(U_i)_i$ and $(V_i)_i$ with $V_i\tbelow U_i$ for every $i\in\{1,\ldots,n\}$ ($n\in\N$), one has $\bigcap_{i=1}^n V_i\tbelow\bigcap_{i=1}^n U_i$.
\end{definition}
Of course, in the definition of $\mT$-stable it is enough to consider the cases $n=0$ and $n=2$, and the first one amounts to saying that every $\frf\in TX$ converges in $X$. If $\mT=\mFa{\omega}$ is the prime filter monad, then $\mFa{\omega}$-core-compact means core-compact and $\mFa{\omega}$-stable means compact and stable in the sense of \citep{Sim82a}. A topological space $X$ is $\mF$-core-compact if, for every point $x\in X$ and every open neighbourhood $V$ of $x$, there exists a point $y\in V$ and a open neighbourhood $U$ of $x$ such that $U\subseteq\downc y$, that is, $X$ is a C-space (see \citep{Ern91a}). Both notions trivialise when $\mT=\mFa{\Omega}$ is the completely prime filter monad: every topological space is $\mFa{\Omega}$-core-compact and $\mFa{\Omega}$-stable.

The following result is essentially \citep[Lemma 3.7]{Sim82a}.
\begin{lemma}
Assume that $X$ is $\mT$-core-compact. Then $X$ is $\mT$-stable if and only if, for every $\frf\in TX$, the set $\lim\frf$ of limit points of $\frf$ is irreducible.
\end{lemma}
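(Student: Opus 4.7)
The plan is to rephrase irreducibility in its standard open-set form---$A\subseteq X$ is irreducible iff $A$ is nonempty and any two opens $U_1,U_2$ meeting $A$ satisfy $A\cap U_1\cap U_2\neq\emptyset$---and then to match, one by one, the two clauses ($n=0$ and $n=2$) of $\mT$-stability with the two clauses (nonemptiness and binary intersection) of irreducibility. In both directions, $\mT$-core-compactness plays the role of a bridge: it lets us replace an open neighbourhood $U$ of a limit point by a smaller open neighbourhood $V$ with $V\tbelow U$, and it is exactly this replacement that converts ``$U\in\calO(x)$'' into ``$V\in\frf$'' (since $\frf\to x$ forces $\calO(x)\subseteq\frf$).

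For the forward direction, the $n=0$ clause of $\mT$-stability says $X\tbelow X$, which is exactly the statement that every $\frf\in TX$ has a limit point, so $\lim\frf\neq\emptyset$. For the binary clause of irreducibility, suppose opens $U_1,U_2$ meet $\lim\frf$ at points $x_1,x_2$. By $\mT$-core-compactness pick $V_i\in\calO(x_i)$ with $V_i\tbelow U_i$; then $V_i\in\frf$ since $\frf\to x_i$, hence $V_1\cap V_2\in\frf$. Now $\mT$-stability yields $V_1\cap V_2\tbelow U_1\cap U_2$, producing a limit point of $\frf$ inside $U_1\cap U_2$.

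For the converse, irreducibility of $\lim\frf$ forces it to be nonempty, delivering the $n=0$ case of stability. For $n=2$, take $V_i\tbelow U_i$ and any $\frf\in TX$ with $V_1\cap V_2\in\frf$. Then both $V_1,V_2\in\frf$, so by assumption $\frf$ admits limit points in each $U_i$; irreducibility of $\lim\frf$ then supplies a limit point in $U_1\cap U_2$, which is precisely the witness for $V_1\cap V_2\tbelow U_1\cap U_2$.

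The whole argument is essentially a dictionary between two formulations of a commutation condition, and I anticipate no serious obstacle. The one point worth being careful about is the degenerate empty-family case: one must recall that $n=0$ in the definition of $\mT$-stability corresponds to $X\tbelow X$, i.e. convergence of every $\frf\in TX$, which in turn is exactly the nonemptiness clause that is implicit in the standard definition of irreducible subset.
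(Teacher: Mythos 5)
Your proof is correct and follows essentially the same route as the paper's: the same matching of the $n=0$ clause with nonemptiness and the $n=2$ clause with the binary intersection condition, with $\mT$-core-compactness used in the forward direction to shrink the $U_i$ to members $V_i\tbelow U_i$ of $\frf$. The only difference is that you spell out explicitly why the empty-family case of stability is equivalent to convergence of every $\frf\in TX$, which the paper leaves implicit.
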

\begin{proof}
Assume first that $X$ is $\mT$-stable, and let $\frf\in TX$. Certainly, $\lim\frf\neq\varnothing$. Let $U_1,U_2\in\calO X$ with $U_1\cap\lim\frf\neq\varnothing$ and $U_2\cap\lim\frf\neq\varnothing$. Let $x_1\in U_1\cap\lim\frf$ and $x_2\in U_2\cap\lim\frf$, and choose $V_1\in\calO(x_1)$, $V_2\in\calO(x_2)$ with $V_1\tbelow U_1$ and $V_2\tbelow U_2$. Then $V_1\in\frf$ and $V_2\in\frf$, hence $V_1\cap V_2\in\frf$, and, since $V_1\cap V_2\tbelow U_1\cap U_2$, we conclude $U_1\cap U_2\cap\lim\frf\neq\varnothing$. Assume now that $\lim\frf$ is irreducible, for every $\frf\in TX$. In particular, $\lim\frf\neq\varnothing$, hence every $\frf\in TX$ converges. Assume now $V_1\tbelow U_1$ and $V_2\tbelow U_2$, and let $\frf\in TX$ with $V_1\cap V_2\in\frf$. Then $U_1\cap\lim\frf\neq\varnothing$ and $U_2\cap\lim\frf\neq\varnothing$, and consequently $U_1\cap U_2\cap\lim\frf\neq\varnothing$. 
\end{proof}

\begin{proposition}
Let $X$ be a topological space. Then $X$ is a $\mT$-algebra if and only if $X$ is sober, $\mT$-core-compact and $\mT$-stable. 
\end{proposition}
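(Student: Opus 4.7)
The plan is to reduce the claim to the two preceding lemmas. The earlier characterization of $\mT$-algebras gives that $X$ is a $\mT$-algebra iff $X$ is T$_{_0}$, $\mT$-core-compact, and every $\frf\in TX$ has a smallest limit point; the lemma just above gives, for $\mT$-core-compact $X$, the equivalence between $\mT$-stability and irreducibility of $\lim\frf$ for every $\frf\in TX$. The task is therefore to swap the existence of smallest limit points for ``sober plus irreducibility of each $\lim\frf$''.

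\emph{Forward direction.} If $X$ is a $\mT$-algebra with structure $l\colon TX\to X$, sobriety is already recorded at the end of Section~1 (via $\mFa{\Omega}\hrw\mT$), and $\mT$-core-compactness is condition (iii) of the earlier lemma. For $\mT$-stability, fix $\frf\in TX$ and observe that $\lim\frf$ is upper-closed in the order $\le$ on $X$: if $x\in\lim\frf$ and $x\le y$, then $\calO(y)\subseteq\calO(x)\subseteq\frf$. Since $l(\frf)$ is the least element of $\lim\frf$, this gives $\lim\frf=\upc l(\frf)=\overline{\{l(\frf)\}}$, which is irreducible, and the preceding lemma delivers $\mT$-stability.

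\emph{Converse.} Assume $X$ is sober, $\mT$-core-compact and $\mT$-stable. Sobriety implies T$_{_0}$; the remaining condition of the earlier lemma is that every $\frf\in TX$ has a smallest limit point. The key step is to note that $\lim\frf$ is always closed in $X$: if $y\notin\lim\frf$ then some $U\in\calO(y)$ satisfies $U\notin\frf$, and this $U$ is an open neighbourhood of $y$ disjoint from $\lim\frf$ (any $x\in U\cap\lim\frf$ would force $U\in\calO(x)\subseteq\frf$). By $\mT$-stability (taking $n=0$), $\lim\frf\neq\varnothing$, and the preceding lemma makes it irreducible. Sobriety then supplies a (unique by T$_{_0}$) generic point $x_0$ with $\lim\frf=\overline{\{x_0\}}$, and the equivalence $y\in\overline{\{x_0\}}\iff x_0\le y$ identifies $x_0$ as the smallest limit point of $\frf$.

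The only real obstacle is the observation that $\lim\frf$ is closed --- without it, sobriety cannot be applied to the set of limit points --- but this falls straight out of the definition of filter convergence. Everything else is bookkeeping that combines the two preceding lemmas with the already-recorded fact that every $\mT$-algebra is sober.
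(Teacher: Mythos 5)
Your proof is correct and is exactly the intended argument: the paper states this proposition without proof, as an immediate consequence of the two preceding lemmas, and your derivation --- trading ``every $\frf\in TX$ has a smallest limit point'' for ``$\lim\frf$ is irreducible closed plus sobriety'' via the identity $\upc x_0=\overline{\{x_0\}}$ --- is precisely the bookkeeping the author leaves to the reader. The two observations you single out (that $\lim\frf$ is up-closed, hence equals $\overline{\{l(\frf)\}}$ in the forward direction, and that $\lim\frf$ is closed in the converse) are the right ones and are verified correctly.
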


\section{Distributive spaces}

We turn now our attention to those $\mT$-algebras $X$ where the left adjoint $l:TX\to X$ of $\yoneda_X:X\to TX$ has a further left adjoint $t:X\to TX$ in $\TOP$, we call such a space \emph{$\mT$-distributive}. Certainly, $t\dashv l$ implies that $t$ is a $\mT$-homomorphism and, since $l:TX\to X$ is surjective, one has $l\cdot t=1_X$. Conversely, if we find a splitting $t:X\to TX$ of $l:TX\to X$ in $\TOP^{\mT}$, then necessarily $t\dashv l$ since $\mT$ is of Kock-Z\"oberlein type. In other words, a $\mT$-algebra $X$ is $\mT$-distributive precisely when its structure map $l:TX\to X$ is a split epimorphism in $\TOP^{\mT}$. It is also worth noting that a $\mT$-algebra admits such a splitting if and only if it is projective with respect to those $\mT$-homomorphisms which are split epimorphisms in $\TOP$. We write $\Spl{{\TOP^\mT}}$ for the full subcategory of $\TOP^{\mT}$ defined by the $\mT$-distributive spaces.

Throughout this section $X$ denotes a $\mT$-algebra with structure map $l:TX\to X$. For any $A\in\calO X$, we put
\[
 \mu(A):=\{x\in X\mid \text{$x=l(\frf)$ for some $\frf\in TX$ with $A\in\frf$}\}.
\]
In other words, we define a kind of \emph{closure} of $A$ where we join to $A$ all \emph{smallest} limit points of filters $\frf\in TX$ on $A$. Since open subsets $U,V\subseteq X$ are down-closed in the underlying order, one has $U\tbelow V$ if and only if $\mu(U)\subseteq V$. Also note that this closure becomes trivial for $\alpha=\Omega$, that is, $\mu(A)=A$.

\begin{lemma}
Let $X$ be a $\mT$-distributive space where $t:X\to TX$ denotes the left adjoint of $l:TX\to X$ in $\TOP$. Then
\[
 t(x)=\{A\in\calO X\mid x\in\mu(A)\}.
\]
\end{lemma}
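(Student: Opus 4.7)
The plan is to prove the set-theoretic equality by showing both inclusions directly from the adjunction $t \dashv l$, using the two facts recalled just above the statement: first, that $l\cdot t = 1_X$ (because $l$ is surjective), and second, that $t\cdot l \le 1_{TX}$ in $\TOP$, which unravels to $\frf\subseteq t(l(\frf))$ for every $\frf\in TX$, since the order on $TX$ is dual to inclusion.

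For the inclusion ``$\supseteq$'', I would start with $A\in\calO X$ and $x\in\mu(A)$. By definition of $\mu(A)$, there is some $\frf\in TX$ with $A\in\frf$ and $l(\frf)=x$. Applying $t$ and using $\frf\subseteq t(l(\frf))$, I get $A\in\frf\subseteq t(l(\frf))=t(x)$, so $A\in t(x)$.

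For the inclusion ``$\subseteq$'', I would take $A\in t(x)$ and produce a filter $\frf\in TX$ witnessing $x\in\mu(A)$ by simply choosing $\frf:=t(x)$. Then $A\in\frf$ by hypothesis, and $l(\frf)=l(t(x))=x$ from $l\cdot t=1_X$; hence $x\in\mu(A)$ by the definition of $\mu$. Note that $t(x)$ is automatically a $\mT$-filter because $t$ takes values in $TX$, so no extra verification is needed here.

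There is no real obstacle: the proof is essentially a one-line unpacking of the adjunction, with the only subtlety being to keep the duality between the order on $TX$ and the inclusion order on filters straight when turning $t\cdot l\le 1_{TX}$ into the set-theoretic containment $\frf\subseteq t(l(\frf))$.
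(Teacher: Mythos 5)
Your proof is correct and follows essentially the same route as the paper: the paper phrases the adjunction as $t(x)\le\frf\iff x\le l(\frf)$ and you use the equivalent unit/counit form $\frf\subseteq t(l(\frf))$ and $l\cdot t=1_X$, but the two inclusions are established by exactly the same two observations (take the witnessing $\frf$ for $x\in\mu(A)$ and push it into $t(x)$; take $\frf:=t(x)$ for the converse). You also correctly handle the only delicate point, namely that the order on $TX$ is reverse inclusion.
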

\begin{proof}
By assumption, for any $\frf\in TX$ and any $x\in X$,
\[
 t(x)\le\frf\iff x\le l(\frf).
\]
Let $A\in\calO X$. If $x\in\mu(A)$, then $x=l(\frf)$ for some $\frf\in TX$ with $A\in\frf$; hence $A\in\frf\subseteq t(x)$. If $A\in t(x)$, then $x\in\mu(A)$ since $l(t(x))=x$.
\end{proof}

\begin{definition}
A $\mT$-algebra $X$ is called \emph{$\mT$-disconnected} if $\mu(A)$ is open, for every $A\in\calO X$.
\end{definition}

The designation ``$\mT$-disconnected'' is motivated by the notion of extremely disconnected space, which is a topological space where the closure of every open subset is open. Furthermore, a compact Hausdorff space $X$ (seen as a $\mFa{\omega}$-algebra) is $\mFa{\omega}$-disconnected if and only if $X$ is extremely disconnected since in this case $\mu$ is just the closure of $X$.

Note that every $\mT$-distributive space is $\mT$-disconnected since $\mu(A)=t^{-1}(A^\#)$. In the remainder of this section we will show that also the converse is true.

\begin{lemma}
Let $A,B\in\calO X$ and $(A_i)_{i\in I}$ be a family of open subsets $A_i$ of $X$ where $\#(I)<\alpha$. Then the following assertions hold.
\begin{enumerate}
\item\label{cond1} $A\subseteq\mu(A)$.
\item\label{cond2} If $A\subseteq B$, then $\mu(A)\subseteq\mu(B)$.
\item\label{cond2a} $\mu(\bigcup_{i\in I}A_i)\subseteq \bigcup_{i\in I}\mu(A_i)$
\item\label{cond3} $A\cap\mu(B)\subseteq\mu(A\cap B)$.
\item\label{cond4} If $X$ is $\mT$-disconnected, then $\mu\mu(A)\subseteq\mu(A)$.
\end{enumerate}
\end{lemma}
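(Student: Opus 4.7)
Items (i)--(iv) will be proven by unpacking the definition of $\mu$. For (i), the principal filter $\yoneda_X(x) = \calO(x)$ witnesses $x \in \mu(A)$ when $x \in A$: since $A$ is open and $x \in A$, we have $A \in \calO(x)$, while $l\cdot\yoneda_X = 1_X$ gives $l(\yoneda_X(x)) = x$. Item (ii) is immediate, since any $\frf$ witnessing $x\in\mu(A)$ contains $B$ by up-closure. Item (iii) uses the defining property of $\alpha$-filters: if $\bigcup_{i\in I} A_i \in \frf$ and $\#(I)<\alpha$, some $A_i$ lies in $\frf$. For (iv), the key is that $l(\frf)=x$ forces $\calO(x)\subseteq\frf$ (from $l\dashv\yoneda_X$); then $x\in A$ open gives $A\in\calO(x)\subseteq\frf$, hence $A\cap B \in\frf$ and $x\in\mu(A\cap B)$.

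Part (v) is the main obstacle. Fix $x \in \mu\mu(A)$, so $x = l(\frf)$ with $\mu(A) \in \frf$ for some $\frf \in TX$ (the statement ``$\mu(A)\in\frf$'' being meaningful because $\mT$-disconnectedness makes $\mu(A)$ open). The strategy is to lift $\frf$ to some $\frF \in TTX$ with $A^\# \in \frF$ and $Tl(\frF) = \frf$. Granting this, the $\mT$-algebra identity $l\cdot\yonmult_X = l\cdot Tl$, together with the defining equivalence $A \in \yonmult_X(\frF) \iff A^\# \in \frF$, shows that $\frg := \yonmult_X(\frF) \in TX$ satisfies $A \in \frg$ and $l(\frg) = l(\frf) = x$, witnessing $x \in \mu(A)$.

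The candidate $\frF$ is built from the prefilter $\{A^\#\} \cup \{l^{-1}(B) : B \in \frf\}$ in $\calO(TX)$. The finite intersection property rests on the same observation used in (iv): for each $B \in \frf$, $\mu(A)\cap B$ lies in $\frf$, hence is non-empty; picking $y = l(\Psi) \in \mu(A)\cap B$ with $A \in \Psi$ gives $B \in \calO(y) \subseteq \Psi$, so $\Psi \in A^\# \cap l^{-1}(B)$. Using the elementary equivalence $A^\# \cap l^{-1}(B) \subseteq l^{-1}(C) \;\Leftrightarrow\; \mu(A)\cap B \subseteq C$, one checks that the filter $\frF^\circ$ generated by this prefilter already satisfies $Tl(\frF^\circ) = \frf$: one inclusion is trivial by taking $B'=B$, and the other uses up-closure of $\frf$ applied to $\mu(A)\cap B' \in \frf$.

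The delicate point, and where I expect the real technical obstacle, is to promote $\frF^\circ$ to an actual $\alpha$-filter $\frF \in TTX$ without enlarging $Tl$. This is vacuous for $\alpha=\Omega$ (there $\mu(A)=A$) and requires nothing beyond the generated filter for $\alpha\in\{0,1\}$. For $\alpha=\omega$ I intend to invoke the prime filter theorem on the frame $\calO(TX)$, producing a prime extension $\frF\supseteq\frF^\circ$ disjoint from the ideal $I\subseteq\calO(TX)$ generated by $\{l^{-1}(C) : C\notin\frf\}$. The disjointness $\frF^\circ \cap I = \varnothing$ is the crux: any element in the intersection would yield $A^\#\cap l^{-1}(B) \subseteq l^{-1}(C_1\cup\cdots\cup C_n)$ for some $B\in\frf$ and $C_1,\dots,C_n\notin\frf$, hence $\mu(A)\cap B \subseteq \bigcup_i C_i$; since $\mu(A)\cap B \in \frf$, up-closure forces $\bigcup_i C_i \in \frf$, and primality of $\frf$ yields some $C_i \in \frf$, contradicting $C_i \notin \frf$. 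The prime extension then satisfies $Tl(\frF) = \frf$ exactly, completing (v).
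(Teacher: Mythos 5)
Your proof is correct, and for items (i)--(iv) it matches the paper, which declares (i)--(iii) obvious and gives exactly your argument for (iv) (openness of $A$ plus $\calO(x)\subseteq\frf$). For item (v) the overall strategy is also the paper's: produce $\frF\in TTX$ with $A^\#\in\frF$ and $Tl(\frF)=\frf$, then apply $\yonmult_X$ together with the algebra law $l\cdot\yonmult_X=l\cdot Tl$. The difference lies in how the lift is obtained. The paper gets it in one line from the abstract fact that $T$ preserves surjections for $\alpha\in\{0,1,\omega\}$: since $l$ restricts to a surjection $TA\to\mu(A)$, the map $Tl:TTA\to T\mu(A)$ is onto, and $\frf$ (restricted along the open set $\mu(A)\in\frf$) lifts to some $\frF\in TTA$. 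You instead construct the lift explicitly from the prefilter $\{A^\#\}\cup\{l^{-1}(B):B\in\frf\}$, check $Tl(\frF^\circ)=\frf$ via the identity $l(A^\#\cap l^{-1}(B))=\mu(A)\cap B$, and for $\alpha=\omega$ invoke the prime filter theorem against the ideal generated by $\{l^{-1}(C):C\notin\frf\}$, with primality of $\frf$ supplying the disjointness. Your route is more self-contained --- it effectively re-proves the relevant instance of ``$T$ preserves surjections'', whose standard justification is precisely this kind of filter-extension argument --- at the cost of a case split over $\alpha$; the paper's version is shorter but defers the real work to an unproved preservation property of the functor. Both treatments correctly dismiss $\alpha=\Omega$ by the triviality $\mu(A)=A$.
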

\begin{proof}
The assertions \eqref{cond1}, \eqref{cond2} and \eqref{cond2a} are obvious. To see \eqref{cond3}, let $x\in A\cap\mu(B)$. There is some $\frf\in TX$ with $B\in\frf$ and $x$ is a smallest limit point of $\frf$. Since $A$ is open, $A\in\frf$ and therefore $A\cap B\in\frf$. Assume now that $X$ is $\mT$-disconnected. The assertion is clear if $\mT$ is the completely prime filter monad. Note that in the other three cases $T:\TOP\to\TOP$ maps surjections to surjections. Let $x\in\mu\mu(A)$. Hence there is some $\frf\in TX$ with $l(\frf)=x$ and $\mu(A)\in\frf$. Since $l:TA\to\mu(A)$ is surjective, $Tl:TTA\to T\mu(A)$ is surjective as well, hence there is some $\frF\in TTA$ with $Tl(\frF)=\frf$. Hence $x=l(\yonmult_X(\frF)$ and $A\in\yonmult_X(\frF)$.
\end{proof}

\begin{corollary}
For all $A,B\in\calO X$, $\mu(A\cap B)=\mu(A)\cap\mu(B)$.
\end{corollary}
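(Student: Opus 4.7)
The plan is to split the equality into its two inclusions and treat them separately. The inclusion $\mu(A\cap B)\subseteq\mu(A)\cap\mu(B)$ is immediate from monotonicity of $\mu$ (part (ii) of the preceding lemma): from $A\cap B\subseteq A$ and $A\cap B\subseteq B$ one reads off both containments at once. Nothing beyond the lemma, and in particular no $\mT$-disconnectedness, is needed here.

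The reverse inclusion $\mu(A)\cap\mu(B)\subseteq\mu(A\cap B)$ is the substantive half, and this is where the $\mT$-disconnectedness hypothesis of the section is used. I would chain together three applications of the preceding lemma. First, since $X$ is $\mT$-disconnected the set $\mu(A)$ is open, so part (iv) applies with $\mu(A)$ in the role of the open first argument and $B$ in the role of the second, yielding
\[
\mu(A)\cap\mu(B)\;\subseteq\;\mu\bigl(\mu(A)\cap B\bigr).
\]
A second application of (iv), now with $B$ open as the first argument and $A$ as the second, gives $B\cap\mu(A)\subseteq\mu(B\cap A)=\mu(A\cap B)$, and monotonicity (ii) applied to this inclusion (both sides being open, the left by $\mT$-disconnectedness, the right by $\mT$-disconnectedness applied to the open set $A\cap B$) promotes it to $\mu(\mu(A)\cap B)\subseteq\mu\mu(A\cap B)$. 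Finally, part (v), available precisely because $A\cap B$ is open and $X$ is $\mT$-disconnected, collapses $\mu\mu(A\cap B)\subseteq\mu(A\cap B)$. Concatenating the three steps closes the chain.

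I do not expect any real obstacle: the argument is a short piece of bookkeeping on top of the previous lemma. The only point that demands care is to invoke $\mT$-disconnectedness at exactly the places where an intermediate set $\mu(A)$ or $\mu(A\cap B)$ needs to be open in order for parts (iv), (ii), and (v) to apply; this is also what explains why the companion inclusion is easy while this one is not, and why the corollary is a genuine consequence of the $\mT$-disconnectedness condition rather than of the $\mT$-algebra structure alone.
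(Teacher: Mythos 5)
Your argument is correct and is essentially the paper's own proof: the paper compresses it into the single chain $\mu(A)\cap\mu(B)\subseteq\mu(A\cap\mu(B))\subseteq\mu(A\cap B)$, which is exactly your two applications of item (iv) (with a $\mu$-image in the open slot, legitimate by $\mT$-disconnectedness) followed by monotonicity and the idempotency $\mu\mu\subseteq\mu$, with the easy reverse inclusion left tacit. You merely swap the roles of $A$ and $B$ in the intermediate term and make the implicit openness checks explicit.
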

\begin{proof}
We calculate:
$\mu(A)\cap\mu(B)\subseteq\mu(A\cap\mu(B))\subseteq\mu(A\cap B)$.
\end{proof}

Therefore, if $X$ is $\mT$-disconnected, then
\[
 t(x):=\{A\in\calO X\mid x\in\mu(A)\}
\]
is a filter on $\calO X$ unreachable by $\alpha$, and we obtain a map $t:X\to TX$. Moreover, $t$ is continuous since $t^{-1}(A^\#)=\mu(A)$ for every $A\in\calO X$.

\begin{lemma}
If $X$ is $\mT$-disconnected, then $t:X\to TX$ is a $\mT$-homomorphism.
\end{lemma}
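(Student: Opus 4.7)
The plan is to verify the monad-homomorphism identity
\[
 t\cdot l \;=\; \yonmult_X\cdot Tt
\]
directly, by unfolding both sides on an arbitrary $\frf\in TX$ and comparing the resulting filters on $\calO X$. Using the description $t(x)=\{A\in\calO X\mid x\in\mu(A)\}$ from the previous lemma, the left-hand side is just
\[
 t(l(\frf))=\{A\in\calO X\mid l(\frf)\in\mu(A)\}.
\]
For the right-hand side, I would unfold $Tt(\frf)=\{B\in\calO(TX)\mid t^{-1}(B)\in\frf\}$ together with $\yonmult_X(\Psi)=\{A\in\calO X\mid A^\#\in\Psi\}$, and then use the identity $t^{-1}(A^\#)=\mu(A)$ observed just before the statement to obtain
\[
 \yonmult_X(Tt(\frf))=\{A\in\calO X\mid \mu(A)\in\frf\}.
\]

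So the whole lemma reduces to proving the equivalence
\[
 l(\frf)\in\mu(A)\iff\mu(A)\in\frf\qquad\text{for every }\frf\in TX\text{ and }A\in\calO X.
\]
For $(\Rightarrow)$ I would invoke $\mT$-disconnectedness: $\mu(A)$ is open, so the assumption $l(\frf)\in\mu(A)$ combined with $\frf\to l(\frf)$ (equivalently $\calO(l(\frf))\subseteq\frf$) immediately gives $\mu(A)\in\frf$. For $(\Leftarrow)$, if $\mu(A)\in\frf$, then taking $\frg=\frf$ in the very definition of $\mu$ exhibits $l(\frf)\in\mu(\mu(A))$, and item \eqref{cond4} of the preceding lemma collapses this to $l(\frf)\in\mu(A)$.

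I expect the harder direction to be $(\Leftarrow)$, precisely because it leans on the idempotency $\mu\mu(A)\subseteq\mu(A)$, whose justification in \eqref{cond4} was the only genuinely non-trivial use of $\mT$-disconnectedness (via the fact that $T\colon\TOP\to\TOP$ preserves surjections, except when $\mT=\mFa{\Omega}$ where the statement trivialises). The other ingredients---continuity of $t$, the explicit formula for $t$, and the convergence description $\frf\to x\iff\calO(x)\subseteq\frf$ flowing from the Kock--Z\"oberlein property---are pure bookkeeping.
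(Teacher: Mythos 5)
Your proposal is correct and follows essentially the same route as the paper: both reduce the homomorphism identity to the equivalence $l(\frf)\in\mu(A)\iff\mu(A)\in\frf$ via $t^{-1}(A^\#)=\mu(A)$, prove the forward direction from openness of $\mu(A)$ together with $\calO(l(\frf))\subseteq\frf$, and the backward direction from the definition of $\mu$ together with $\mu\mu(A)\subseteq\mu(A)$. Your closing remarks about where $\mT$-disconnectedness is genuinely used are accurate as well.
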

\begin{proof}
Let $\frf\in TX$ and $x\in X$ with $l(\frf)=x$. We wish to show that $t(x)=\yonmult_X(Tt(\frf)$, that is, for every $A\in\calO X$ one has $A\in t(x)$ if and only if $t^{-1}(A^\#)\in\frf$, which in turn is equivalent to $\mu(A)\in\frf$. Now, since $l(\frf)=x$, $x\in\mu(A)$ implies $\mu(A)\in\frf$, and $\mu(A)\in\frf$ implies $x\in\mu\mu(A)=\mu(A)$.
\end{proof}

\begin{lemma}
If $X$ is $\mT$-disconnected, then $l(t(x))=x$.
\end{lemma}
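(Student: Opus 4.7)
The plan is to establish both inequalities $l(t(x)) \le x$ and $x \le l(t(x))$ separately; since every $\mT$-algebra is T$_{_0}$, the induced order on $X$ is anti-symmetric and we conclude equality.

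For the first inequality, I would start from condition (i) of the previous lemma: for every $A \in \calO(x)$ one has $x \in A \subseteq \mu(A)$, hence $A \in t(x)$. Thus $\calO(x) \subseteq t(x)$, which in the dual-to-inclusion order of $TX$ reads $t(x) \le \yoneda_X(x)$. Applying $l$ and using $l \cdot \yoneda_X = 1_X$ gives $l(t(x)) \le x$ (equivalently, one invokes the adjunction $l \dashv \yoneda_X$ directly).

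The reverse inequality $x \le l(t(x))$ amounts to $\calO(l(t(x))) \subseteq \calO(x)$. Given an open $U$ with $l(t(x)) \in U$, the filter $t(x)$ belongs to $l^{-1}(U)$, which is open in $TX$ since $l$ is continuous. Because $A^\# \cap B^\# = (A\cap B)^\#$, the family $\{A^\# \mid A \in \calO X\}$ is a base (not merely a subbase) for the topology on $TX$; hence there is some $A \in \calO X$ with $A \in t(x)$ and $A^\# \subseteq l^{-1}(U)$. The second inclusion says precisely $\mu(A) \subseteq U$ (every filter in $TX$ containing $A$ has its smallest limit point in $U$), while $A \in t(x)$ means $x \in \mu(A)$, so $x \in U$, as required.

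No real obstacle is anticipated: once one observes that continuity of $l$ together with the base $\{A^\#\}$ of $TX$ translates to the identity $l^{-1}(U) = \bigcup\{A^\# \mid A \in \calO X,\; \mu(A) \subseteq U\}$, the reverse inequality falls out mechanically. Note that the hypothesis of $\mT$-disconnectedness has already been spent when defining $t: X \to TX$ as a continuous map; for the present lemma it enters only implicitly via the well-definedness of $t$.
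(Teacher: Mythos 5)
Your proof is correct and follows essentially the same route as the paper's: both halves reduce to producing an open $A\in t(x)$ with $\mu(A)\subseteq U$ and then using $A\in t(x)\Rightarrow x\in\mu(A)$. The only cosmetic difference is that the paper obtains this witness from $\mT$-core-compactness at a limit point of $t(x)$ (i.e.\ some $V$ with $V\tbelow U$), whereas you extract it from continuity of $l$ together with the observation that the sets $A^\#$ form a base of $TX$ --- two formulations of the same fact.
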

\begin{proof}
Clearly, $\calO(x)\subseteq t(x)$, hence $t(x)$ converges to $x$. Assume now that $t(x)\to y$. Let $U\in\calO(y)$. Since $X$ is $\mT$-core-compact, we find some $V\in\calO(y)$ with $\mu(V)\subseteq U$. From $t(x)\to y$ we get $V\in t(x)$, and therefore $x\in\mu(V)\subseteq U$. Hence $x\le y$.
\end{proof}
Summing up, we have shown:
\begin{theorem}\label{thm:Tdist}
A topological space $X$ is $\mT$-distributive if and only if $X$ is sober, $\mT$-core-compact, $\mT$-stable and $\mT$-disconnected.
\end{theorem}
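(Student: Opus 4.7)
The theorem is really the culmination of the lemmas just established, so the plan is essentially to assemble them rather than prove anything new. I will organize the argument as two implications.

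For the forward direction, if $X$ is $\mT$-distributive then by definition $X$ is a $\mT$-algebra, so the Proposition from the previous section immediately gives that $X$ is sober, $\mT$-core-compact and $\mT$-stable. The remaining condition, $\mT$-disconnectedness, is the observation already recorded right after the definition of that notion: if $t:X\to TX$ is the left adjoint of $l$, then $\mu(A)=t^{-1}(A^\#)$, which is open because $t$ is continuous and $A^\#\subseteq TX$ is open. This direction therefore needs no further work.

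For the backward direction, assume $X$ is sober, $\mT$-core-compact, $\mT$-stable and $\mT$-disconnected. The first three conditions give, via the Proposition, that $X$ is a $\mT$-algebra with structure map $l:TX\to X$ sending a filter to its smallest limit point. I would then define the candidate left adjoint $t:X\to TX$ by the formula appearing in the earlier lemma,
\[
t(x):=\{A\in\calO X\mid x\in\mu(A)\}.
\]
The lemmas just proved show that $t$ is well-defined (the $\mT$-disconnectedness guarantees $\mu(A\cap B)=\mu(A)\cap\mu(B)$, so $t(x)$ is a filter unreachable by $\alpha$), continuous (since $t^{-1}(A^\#)=\mu(A)$ is open precisely by $\mT$-disconnectedness), a $\mT$-homomorphism, and a section of $l$ in $\TOP^{\mT}$.

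To finish, I would invoke the Kock–Zöberlein property of $\mT$ recalled in the section on filter monads: any splitting of the structure morphism $l$ in $\TOP^{\mT}$ is automatically left adjoint to $l$ in $\TOP$. Hence $t\dashv l$ and $X$ is $\mT$-distributive. There is no real obstacle here; the only conceptual point worth flagging is that the definition of $t$ and the verification that it lands in $TX$ (i.e.\ that $t(x)$ is an $\alpha$-filter and not just a filter) rely on the preservation of finite intersections by $\mu$, which in turn is where $\mT$-disconnectedness actually enters, via the step $\mu\mu(A)\subseteq\mu(A)$ established using that $T$ preserves surjections in the three relevant cases.
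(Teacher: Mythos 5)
Your proposal is correct and matches the paper's own argument, which likewise presents the theorem as a summary (``Summing up, we have shown'') of the preceding lemmas: the forward direction via the Proposition on $\mT$-algebras together with the observation $\mu(A)=t^{-1}(A^\#)$, and the converse by constructing $t(x)=\{A\in\calO X\mid x\in\mu(A)\}$ as a homomorphic splitting of $l$ and invoking the Kock--Z\"oberlein property to get $t\dashv l$. Nothing is missing.
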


\section{Dualities via idempotents split completion}

A further important result of order theory on which we wish to expand here states that the category of completely distributive lattices and suprema preserving maps is equivalent to the idempotents split completion of the category $\REL$ of sets and relations, and also to the idempotents split completion of the category $\MOD$ of ordered sets and modules (see \citep{RW_CCD4} for details). We do not need to search anymore for a topological variant of these equivalences since the authors of \citep{RW_CCD4} have already given a very general version of their result: in \citep{RW_Split} they show that, for a monad $\monadfont{D}$ on a category $\mathsf{C}$ where idempotents split,  the idempotents split completion $\kar(\catfont{C}_\monadfont{D})$ of the Kleisli category $\catfont{C}_\monadfont{D}$ of $\monadfont{D}$ is equivalent to the category $\Spl{{\catfont{C}^\monadfont{D}}}$ of split Eilenberg-Moore algebras (see below) for $\monadfont{D}$. The original result for completely distributive lattices one obtains for $\monadfont{D}$ being the down-set monad on $\ORD$ or the powerset monad on $\SET$. In \citep{Hof_DualityDistSp} we have already applied \citep{RW_Split} to various kinds of filter monads, and in this section we add a couple of further observations to this study.

A typical example of an idempotent $e:X\to X$ (i.e. $e=e\cdot e$) in a category $\catfont{A}$ is a morphism $e=s\cdot r$ where $r\cdot s=1$. One says that \emph{idempotents split} in $\catfont{A}$ whenever every idempotent is of this form. This is the case for every complete or cocomplete category, and a good example of a category where idempotents do not split is the category $\REL$ of relations (take $<:\R\relto\R$, for instance). For every category $\catfont{A}$ there exists its universal \emph{idempotents split completion} (which is equivalent to $A$ precisely when idempotents split in $\catfont{A}$), and it is an important fact for us that this completion is ``some sort of Cauchy completion for categories''. For instance, given a category $\catfont{A}$, its idempotents split completion can be calculated as follows:
\begin{itemize}
\item embed $\catfont{A}$ fully into a category $\catfont{B}$ where idempotents split,
\item then the idempotents split completion of $\catfont{A}$ is ``the closure of $\catfont{A}$ in $\catfont{B}$'', that is, the full subcategory of $\catfont{B}$ defined by the split subobjects in $\catfont{B}$ of objects of $\catfont{A}$.
\end{itemize}
Certainly, this procedure resembles the situation for metric spaces.

Adjunctions induce monads, and every monad is induced by an adjunction. In fact, given a monad $\mT=\monad$ on a category $\catfont{C}$, there are at least two adjunctions which induce $\mT$, namely the Kleisli construction $F_\mT\dashv G_\mT:\catfont{C}_{\mT}\leftrightarrows\catfont{C}$ and the Eilenberg-Moore construction $F^\mT\dashv G^\mT:\catfont{C}^{\mT}\leftrightarrows\catfont{C}$. The former is the smallest and the latter is the largest adjunction inducing $\mT$ in the sense that, whenever an adjunction $F\dashv G:\catfont{X}\leftrightarrows\catfont{C}$ induces $\mT$, then there are comparison functors $K:\catfont{C}_{\mT}\to\catfont{X}$ and $K':\catfont{C}^{\mT}\to\catfont{X}$ commuting with both the left and the right adjoints. It is useful here to observe here that $K:\catfont{C}_{\mT}\to\catfont{X}$ is always fully faithful.

In particular, there is a fully faithful comparison functor $\catfont{C}_{\mT}\to\catfont{C}^{\mT}$. Furthermore, idempotents split in $\catfont{C}^{\mT}$ provided they do so in $\catfont{C}$. Hence, by the above, the idempotents split completion of $\catfont{C}_{\mT}$ can be calculated as the closure of $\catfont{C}_{\mT}$ in $\catfont{C}^{\mT}$, which turns out to be the full subcategory $\Spl{{\catfont{C}^{\mT}}}$ of $\catfont{C}^{\mT}$ consisting of all those algebras $(X,l)$ where $l:TX\to X$ admits a homomorphic splitting, that its, there exists a $\mT$-homomorphism $t:X\to TX$ with $l\cdot t=1_X$. The point we wish to make here is that this is just one way to calculate the idempotents split completion of $\catfont{C}_{\mT}$, any other full embedding of $\catfont{C}_{\mT}$ into a category where idempotents split leads to a possibly different but necessarily equivalent completion. For instance, for any adjunction $F\dashv G:\catfont{X}^\op\leftrightarrows\catfont{C}$ which induces $\mT$ and where idempotents split in $\catfont{X}$ (and hence in $\catfont{X}^\op$), the comparison functor $K:\catfont{C}_{\mT}\to\catfont{X}^\op$ embeds $\catfont{C}_{\mT}$ fully in $\catfont{X}^\op$. Hence, the closure of $\catfont{C}_{\mT}$ in $\catfont{X}^\op$ is equivalent to $\Spl{{\catfont{C}^{\mT}}}$. We argue here that this procedure gives a unified and simple way to prove several duality theorems.

We return now to our monad $\mT=\mFa{\alpha}$ of $\alpha$-filters. By the discussion above, the category $\Spl{{\TOP^\mT}}$ of $\mT$-distributive spaces and $\mT$-homomorphisms is equivalent to the idempotents split completion of the Kleisli category $\TOP_{\mT}$ of $\mT$. But the monad $\mT$ is also induced by the adjunction
\[
\xymatrix{\SLat{\alpha}^\op\stackrel{\eta}{\Rw}\ar@<0.5ex>[rr]^{F_{\!\!\alpha}}&&
\stackrel{\eps}{\Lw}\TOP,\ar@<1ex>[ll]^{\calO}}
\]
where $\SLat{\alpha}$ denotes the category with objects all meet semi-lattices which admit all suprema of size smaller then $\alpha$ ($\alpha$-suprema for short) and where finite infima commute with $\alpha$-suprema, and the morphisms are the monotone maps preserving finite infima and $\alpha$-suprema. Note that $\SLat{\alpha}$ is an algebraic category, hence idempotents split. The comparison functor $K:\TOP_\mT\to\SLat{\alpha}^\op$ sends a space $X$ to $\calO X$, and $r:X\to TY$ to $\calO Y\to\calO X,\,B\mapsto r^{-1}(B^\#)$.

\begin{lemma}
An object $L$ in $\SLat{\alpha}^\op$ belongs to the closure of $K$ if and only if $L$ is a frame where $\alpha$-filters separate points.
\end{lemma}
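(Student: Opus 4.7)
The plan is to unfold the definition of the closure: $L\in\SLat{\alpha}^{\op}$ lies in the closure of $K$ precisely when there exist a topological space $X$ and $\SLat{\alpha}$-morphisms $s\colon L\to\calO X$ and $r\colon\calO X\to L$ with $r\cdot s=1_L$. Both implications will be established by constructing or analysing such a splitting.

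For necessity, suppose $s,r$ as above are given. First I would promote $L$ to a frame: arbitrary suprema are manufactured by $\bigvee_i a_i:=r(\bigcup_i s(a_i))$ and shown to be suprema via monotonicity of $s,r$ together with $r\cdot s=1_L$; frame distributivity reduces, after rewriting both sides of $a\wedge\bigvee_i b_i=\bigvee_i(a\wedge b_i)$ as $r(s(a)\cap\bigcup_i s(b_i))$, to the fact that $s,r$ preserve finite meets and that $\calO X$ is a frame. For separation by $\alpha$-filters, each $x\in X$ yields an $\alpha$-filter $F_x:=\{c\in L\mid x\in s(c)\}$ on $L$ (the filter and $\alpha$-primeness axioms follow at once from $s$ being a $\SLat{\alpha}$-morphism); and whenever $a\not\le b$ we have $s(a)\not\subseteq s(b)$ (else $a=rs(a)\le rs(b)=b$), so any $x\in s(a)\setminus s(b)$ produces an $F_x$ separating $a$ from $b$.

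For sufficiency, assume $L$ is a frame whose $\alpha$-filters separate points, and let $X:=F_\alpha L$ carry the topology generated by the basic opens $\hat a:=\{F\in X\mid a\in F\}$; since $\hat a\cap\hat b=\widehat{a\wedge b}$, these form a basis. Define
\[
s\colon L\to\calO X,\quad s(a):=\hat a,\qquad r\colon\calO X\to L,\quad r(U):=\bigvee\{a\in L\mid \hat a\subseteq U\},
\]
the latter being well-defined because $L$ is a frame. That $s$ is a $\SLat{\alpha}$-morphism is immediate (the $\alpha$-join clause is precisely the $\alpha$-prime condition for filters). Separation yields the equivalence $\hat b\subseteq\hat a\iff b\le a$, and hence $r\cdot s=1_L$; preservation of binary meets by $r$ is a short application of frame distributivity in $L$ together with $\hat a\cap\hat b=\widehat{a\wedge b}$.

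The genuinely delicate step, which I expect to be the main obstacle, is preservation of $\alpha$-joins by $r$: given $\hat a\subseteq\bigcup_{i\in I}U_i$ with $\#(I)<\alpha$, one must deduce $a\le\bigvee_i r(U_i)$. By separation of $\alpha$-filters it suffices to show $\bigvee_i r(U_i)\in F$ for every $\alpha$-filter $F\ni a$. Any such $F$ lies in $\hat a\subseteq\bigcup_i U_i$, hence in some $U_i$; the basis property then supplies $b\in F$ with $\hat b\subseteq U_i$, forcing $b\le r(U_i)\le\bigvee_i r(U_i)$, and therefore $\bigvee_i r(U_i)\in F$ by upward closure.
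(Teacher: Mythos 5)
Your proof is correct and follows the same overall skeleton as the paper's: necessity via the elementary observation that completeness, the frame law and separation by $\alpha$-filters are inherited by retracts in $\SLat{\alpha}$ (your filters $F_x$ are exactly the pullbacks along $s$ of the point filters of $\calO X$, which is the paper's ``Fact 3'' made explicit), and sufficiency by exhibiting $L$ as a retract of $\calO(F_{\alpha}L)$ through the unit $a\mapsto \hat a$. Where you genuinely diverge is in the choice and treatment of the retraction $r$ in the sufficiency half: the paper takes the \emph{left adjoint} of $\eta_L$ (which exists because $\eta_L$ preserves all infima, so preservation of $\alpha$-suprema comes for free) and then cites an external argument for preservation of finite infima; you instead take the explicit lower approximation $r(U)=\bigvee\{a\mid\hat a\subseteq U\}$ and verify both clauses directly --- finite meets via the frame law in $L$, and $\alpha$-joins via the separation hypothesis together with the fact that the $\hat a$ form a basis. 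Your route costs an extra (but short and correct) argument for $\alpha$-join preservation, and in exchange it is entirely self-contained, avoiding the reference to an adjoint whose existence itself needs the nontrivial fact that $\eta_L$ preserves arbitrary infima. Both retractions agree on basic opens, so the two constructions produce the same splitting up to the usual identifications; there is no gap in your argument.
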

\begin{proof}
Assume first that $S$ is in the closure of $K$, hence there are morphisms $s:S\to\calO X$ and $r:\calO X\to S$ with $rs=1_S$, for some topological space $X$. The assertion follows now from the following obvious facts about $s:M\to L$, $r:L\to M$ in $\ORD$ with $rs=1_M$.\\
\textsc{Fact 1:} If $L$ is complete, then so is $M$. Moreover, for a family $(x_i)_{i\in I}$ of elements of $M$, the supremum of $(x_i)_{i\in I}$ in $M$ is given by $r(\Sup_{i\in I}s(x_i))$.\\
\textsc{Fact 2:} If $r$ and $s$ preserve finite infima and $L$ is a frame, then so is $M$.\\
\textsc{Fact 3:} If $s$ preserves $\alpha$-suprema and $\alpha$-filters separate points in $L$, then they do so in $M$.

To see the other direction, for any such frame we can consider the unit
\[
 \eta_L:L\to\calO(F_{_{\! \alpha}}L),\,x\mapsto x^\#
\]
of the adjunction above which is an embedding in $\SLat{\alpha}$ by hypothesis. Furthermore, $\eta_L$ preserves all infima and its left adjoint preserves finite infima (which can be shown as in \citep[Proposition 7.3]{Hof_DualityDistSp}, for instance).
\end{proof}
The full subcategory of $\SLat{\alpha}$ determined by those frames where $\alpha$-filters separate points we denote as $\Frm{\alpha}$. Note that $\Frm{\alpha}$ contains in fact all frames for $\alpha=\omega,1,0$, and for $\alpha=\Omega$ it is the category of spatial frames and frame homomorphisms. The discussion above implies now

\begin{theorem}\label{thm:duality}
For any $\alpha\in\{0,1,\omega,\Omega\}$, $\Frm{\alpha}^\op\simeq\Spl{\TOP^{\mFa{\alpha}}}$.
\end{theorem}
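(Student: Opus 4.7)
My plan is to apply the general principle spelled out in this section: both sides are realisations of the idempotents split completion $\kar(\TOP_\mT)$ of the Kleisli category of $\mT=\mFa{\alpha}$, hence they are equivalent to each other. The argument therefore reduces to checking the two concrete realisations.

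First, I would handle the right-hand side. The category $\TOP$ is complete and cocomplete, so idempotents split there; this lifts to $\TOP^\mT$, since an algebra homomorphism $e:(X,l)\to(X,l)$ with $e=e\cdot e$ splits in $\TOP$ as $e=s\cdot r$, and the induced object $(Y,r\cdot l\cdot Ts)$ inherits a canonical algebra structure with $s,r$ homomorphisms. Then, invoking the theorem of Rosebrugh--Wood from \citep{RW_Split}, the full subcategory $\Spl{\TOP^\mT}$ of algebras whose structure map admits a homomorphic splitting is precisely $\kar(\TOP_\mT)$, computed as the closure of (the fully faithful image of) the comparison functor $\TOP_\mT\hookrightarrow\TOP^\mT$ under split subobjects.

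Second, I would handle the left-hand side. The adjunction $F_{\!\!\alpha}\dashv\calO:\SLat{\alpha}^\op\leftrightarrows\TOP$ induces $\mT$; this is essentially the description of $\mFa{\alpha}X$ as the set of $\alpha$-filters on $\calO X$, with the open-sets-as-an-$\alpha$-prealgebra semantics. Since $\SLat{\alpha}$ is an algebraic (hence complete) category, idempotents split in $\SLat{\alpha}$ and therefore in $\SLat{\alpha}^\op$. The comparison functor $K:\TOP_\mT\to\SLat{\alpha}^\op$ is automatically fully faithful (this holds for any adjunction inducing a monad), so by the recipe given in the introduction of this section the closure of the image of $K$ inside $\SLat{\alpha}^\op$ is another model of $\kar(\TOP_\mT)$. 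By the preceding lemma, this closure coincides with $\Frm{\alpha}^\op$.

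Putting the two descriptions together yields the chain of equivalences
\[
\Frm{\alpha}^\op\;\simeq\;\kar(\TOP_\mT)\;\simeq\;\Spl{\TOP^{\mFa{\alpha}}},
\]
which is the claim. The main conceptual step, already done in the preceding lemma, is the identification of the closure of $K$ with $\Frm{\alpha}^\op$; the rest is a clean bookkeeping exercise applying the Rosebrugh--Wood machinery. The only point that requires a moment's care is verifying that splitting of idempotents transfers from $\TOP$ to $\TOP^\mT$ and that $K:\TOP_\mT\to\SLat{\alpha}^\op$ lands in a category where idempotents split, both of which are standard.
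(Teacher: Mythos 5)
Your proposal is correct and follows exactly the argument the paper intends: both sides are exhibited as the idempotents split completion of the Kleisli category $\TOP_{\mT}$, the right-hand side via the Rosebrugh--Wood identification of $\Spl{\TOP^{\mT}}$ with the closure of $\TOP_{\mT}$ in $\TOP^{\mT}$, and the left-hand side via the comparison functor into $\SLat{\alpha}^\op$ together with the preceding lemma identifying its closure with $\Frm{\alpha}^\op$. The paper's own ``proof'' is precisely this discussion, so there is nothing to add.
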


\begin{corollary}
For any $\alpha\in\{0,1,\omega,\Omega\}$, a topological space $X$ is $\mFa{\alpha}$-distributive if and only if $X$ is the $\alpha$-filter space of a frame.
\end{corollary}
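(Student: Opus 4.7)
The plan is to read the corollary off Theorem~\ref{thm:duality} by identifying, on objects, what the equivalence $\Frm{\alpha}^{\op}\simeq\Spl{{\TOP^{\mFa{\alpha}}}}$ does. The right adjoint $F_{\alpha}$ of the adjunction inducing $\mT$ factors through the Eilenberg--Moore category via the standard comparison functor $K':\SLat{\alpha}^{\op}\to\TOP^{\mT}$: on objects, $K'$ sends a frame $L$ to a $\mT$-algebra whose underlying topological space is precisely the $\alpha$-filter space $F_{\alpha}L$, with structure map induced by the canonical map $\eta_L:L\to\calO F_{\alpha}L$.

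I would then verify two claims about $K'$ restricted to $\Frm{\alpha}^{\op}$: that it takes values in $\Spl{{\TOP^{\mT}}}$, and that it coincides, up to natural isomorphism, with the equivalence of Theorem~\ref{thm:duality}. For the first, the lemma preceding Theorem~\ref{thm:duality} furnishes, for every $L\in\Frm{\alpha}$, a $\SLat{\alpha}$-morphism retraction of $\eta_L$; applying $F_{\alpha}$ transports this into a $\mT$-homomorphic splitting of the structure map of $K'(L)$. For the second, both $K'|_{\Frm{\alpha}^{\op}}$ and the equivalence of Theorem~\ref{thm:duality} send $\calO X = K(X)$ (an object in the common image of the Kleisli category $\TOP_{\mT}$) to the free algebra $(TX,\yonmult_X)$, and both land in the idempotents split completion of $\TOP_{\mT}$ inside $\TOP^{\mT}$; the universal property of that completion then forces the two functors to be naturally isomorphic.

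Granted this identification, both implications of the corollary are immediate. If $X$ is $\mFa{\alpha}$-distributive, then the algebra $(X,l)$ lies in $\Spl{{\TOP^{\mT}}}$, so Theorem~\ref{thm:duality} supplies some $L\in\Frm{\alpha}$ with $K'(L)\simeq(X,l)$ as $\mT$-algebras, giving $X\simeq F_{\alpha}L$. Conversely, for any $L\in\Frm{\alpha}$ the space $F_{\alpha}L$ underlies the algebra $K'(L)\in\Spl{{\TOP^{\mT}}}$, and is therefore $\mFa{\alpha}$-distributive. The only step here that is not pure bookkeeping is the identification of $K'|_{\Frm{\alpha}^{\op}}$ with the equivalence of Theorem~\ref{thm:duality}; the rest is a direct readout of the adjunction $\calO\dashv F_{\alpha}$.
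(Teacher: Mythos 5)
Your proposal is correct and follows the same route the paper intends: the corollary is read off from Theorem~\ref{thm:duality} by noting that the equivalence is implemented on objects by the comparison functor sending a frame $L$ to the algebra on its $\alpha$-filter space $F_{\alpha}L$, with the lemma preceding the theorem supplying the splitting of $\eta_L$. The paper leaves all of this implicit (it states the corollary without proof), so your write-up is simply a more explicit version of the same argument.
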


\begin{examples}
For $\alpha=\Omega$, Theorem \ref{thm:duality} just confirms the well-known duality between spatial frames and sober spaces.\\
For $\alpha=\omega$, the category $\SLat{\omega}$ is precisely the category $\DLAT$ of bounded distributive lattices and lattice homomorphism, and $\Frm{\omega}$ its full subcategory consisting of all frames. On the other side, $\TOP^{\mFa{\omega}}$ is the category of all stably compact spaces or, equivalently, ordered compact Hausdorff spaces. Every $\mFa{\omega}$-distributive space is a Priestley space, in fact, $\mFa{\omega}$-distributive spaces are exactly the f-spaces in the sense of \citep{PS_Frames_vs_Priestley}. Hence, Theorem \ref{thm:duality} states the restriction of Priestley's duality theorem (see \citep{Pri70,Pri72}) to frames and f-spaces respectively.\\
For $\alpha=0$, $\SLat{0}$ is the category $\SLAT$ of meet semi-lattices and meet semi-lattice homomorphisms, and $\Frm{0}$ is the category $\FRM_\wedge$ of frames and meet semi-lattice homomorphisms. From Theorem \ref{thm:duality} we infer that $\FRM_\wedge$ is dually equivalent to $\Spl{\TOP^\mF}$.
\end{examples}

\def\cprime{$'$}

\end{document}